\documentclass[pdflatex,sn-mathphys-num]{sn-jnl}

\usepackage{amsmath,amssymb,amsfonts}
\usepackage{amsthm}
\usepackage{mathrsfs}
\usepackage{graphicx}
\usepackage{xcolor}
\usepackage{booktabs}
\usepackage{subcaption}   
\usepackage{caption}      
\captionsetup[figure]{name=Fig.,labelsep=space}
\captionsetup[sub]{labelformat=simple,labelsep=space}

\usepackage{enumitem}     
\setlist[enumerate]{itemsep=2pt,topsep=4pt}


\numberwithin{equation}{section}

\newtheorem{theorem}{Theorem}[section]
\newtheorem{lemma}[theorem]{Lemma}

\theoremstyle{definition}
\newtheorem{definition}[theorem]{Definition}

\theoremstyle{remark}
\newtheorem{remark}[theorem]{Remark}

\raggedbottom

\begin{document}

\title{Counting Zeros of Complex-Valued Harmonic Functions via Rouché's Theorem}

\author*{\fnm{Japheth} \sur{Carlson}}\email{jrc117@byu.edu}

\affil{\orgname{Brigham Young University Mathematics Department}, 
\city{Provo}, 
\state{UT}, 
\postcode{84602}, \country{USA}}

\abstract{Rouché's Theorem is among the most useful results in complex analysis for counting zeros of analytic functions. Rouché's Theorem also admits a harmonic analogue for counting zeros of complex harmonic functions. Previously, this analogue has been applied primarily to closed curves of simple geometry, such as circles, to count zeros. We demonstrate that non-circular critical curves can serve as effective contours by applying a harmonic Rouché-type argument to determine the total number of zeros of the complex harmonic family given by $f(z) = z^n + az^k + b\overline{z}^k - 1 $, where $n>k\geq1$ and $a,b > 0$. Under explicit inequalities relating $a$ and $b$, we determine the total number of zeros is either $n$ or $n+2k$ (counted with multiplicity). We also prove the zeros of $f$ are confined to the union of two explicit annuli in the plane: an inner annulus containing $k$ zeros and an outer annulus containing the remainder. }

\keywords{Complex harmonic polynomials, Critical curve, Zero localization, Zero counting, Sense-preserving and sense-reversing regions, Rouché's theorem for harmonic functions}

\pacs[MSC Classification]{30C15, 31A05}

\maketitle

\section{Introduction}\label{Introduction}

Consider the analytic polynomial
\[
\phi(z) = z^n + az^k - 1,
\]
where \(a\) is a positive real number and \(n\) and \(k\) are natural numbers with \(n > k\). By the Fundamental Theorem of Algebra, we know \(\phi\) has \(n\) zeros (counting multiplicities).

However, suppose we want to count the zeros of the complex \textit{harmonic} polynomial
\[
f(z) = z^n + az^k + b\overline{z}^k - 1,
\]
where $b$ is also a positive real number. Because \(f\) is not analytic and contains a variable and its conjugate, the number of zeros also depends upon the coefficient values of \(a\) and \(b\), as well as \textit{both} exponent values, $n$ and $k$. As a result, we are forced to look outside the Fundamental Theorem of Algebra to count the zeros of $f$.

To illustrate this point, contrast the functions

\[
f_1 = z^9 + z^4 + 0.5\overline{z}^4 - 1
\]

and
\[
f_2 = z^9 + 4.5z^4 + 7\overline{z}^4 - 1.
\]

The zeros of $f_1$ (Figure \ref{fig:plot1}) and $f_2$ (Figure \ref{fig:plot2}) are plotted and counted.

\begin{figure}[ht]
 \centering
 \begin{minipage}[b]{0.45\textwidth}
 \centering
 \includegraphics[width=\textwidth]{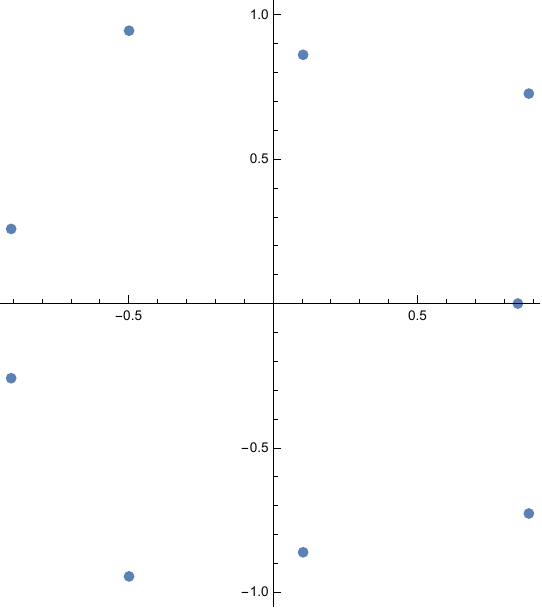}
 \caption{Zeros of $f_1$}
 \label{fig:plot1}
 \end{minipage}
 \hspace{0.05\textwidth}
 \begin{minipage}[b]{0.45\textwidth}
 \centering
 \includegraphics[width=\textwidth]{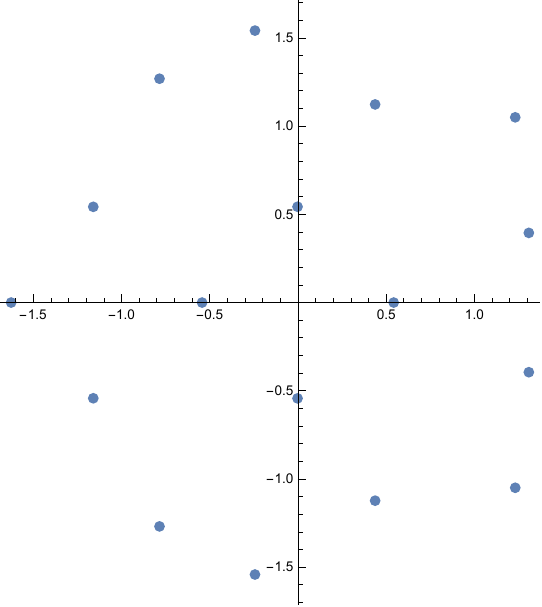}
 \caption{Zeros of $f_2$}
 \label{fig:plot2}
 \end{minipage}
\end{figure}

Even though $f_1$ and $f_2$ have the same exponents, $f_1$ has nine zeros while $f_2$ has 17 zeros. Clearly, the number of zeros is influenced not only by the degree, $n$, but by $a$ and $b$. This demonstrates the complicated and often unexpected behavior of complex harmonic polynomials.

With this example in mind, it is natural to wonder how to count the zeros of $f$ for different parameter values, as well as what the possible number of zeros of $f$ could be.

While investigating complex harmonic polynomials of degree $n$, Wilmshurst \cite{Wilmshurst} conjectured the valence, and therefore the count of the zeros, for such polynomials to be bounded above by $n^2$. At the same time, Wilmshurst constructed examples achieving this bound, indicating it is sharp. However, Wilmshurst's conjecture did not yet provide the tools to find the exact count of zeros or their location for specific harmonic polynomial families.

Among the first to count the zeros of a specific harmonic family were Brilleslyper et al. \cite{Brilleslyper}, who studied the harmonic trinomial family, denoted $p_c$, given by

\[
p_c(z) = z^n + c\overline{z}^k - 1.
\]

In their analysis, Brilleslyper et al. focused on a closed loop called the \textit{critical curve}, which is the set dividing the complex plane into two regions: (1) sense-\textit{preserving} and (2) sense-\textit{reversing}. This idea will be discussed further in Section \ref{Background}.

To count the zeros of $p_c$, Brilleslyper et al. successfully applied the argument principle to the critical curve of $p_c$. While this argument led to a precise zero-counting result for their family, it relied heavily upon the geometry of the critical curve of $p_c$ being circular and also provided little information as to the location of the zeros counted.

Subsequent researchers have studied harmonic trinomial families \cite{Brooks_Hudson, Brooks_Muthuprakash, Work}, including related families with poles \cite{Brooks_Lee, Lee}, but have similarly focused their efforts on families with circular critical curves or used other methods without directly analyzing the critical curve. Sandberg \cite{Sandberg} recently investigated a specific family of harmonic quadrinomials constructed so that the unit circle is always at least part of the critical curve, providing both the count of the zeros and bounds for their locations for given parameter values. Even so, results beyond harmonic trinomials and harmonic families with circular critical curves remained limited. For more information about the mathematical foundation of this field, see \cite{Melman, Wilmshurst}.

Also of interest, researchers have begun investigating and bounding the location of zeros of complex harmonic polynomials through various methods. Recently, Geleta and Alemu \cite{Geleta} derived the locations for the zeros of $p_c$ and provided a closed disk in which all zeros of complex-valued harmonic polynomials must exist. More detailed regions for harmonic trinomials were later found by Gao et al. \cite{Gao} and Melman \cite{Melman_location}, opening the door to similarly investigate complex harmonic quadrinomials with more complicated geometries.

Turning our attention back to our earlier example, we see the quadrinomial

\[
f(z) = z^n + az^k + b\overline{z}^k - 1
\]
is constructed by introducing a fourth term, $az^k$, to the family $p_c$ studied by Brilleslyper et al. Notably, the introduction of this term provides the simplest harmonic extension of $p_c$ that breaks circular symmetry of the critical curve, as evidenced in Figure \ref{CC_f1} and Figure \ref{CC_f2}.

More specifically, by separating $f$ into its analytic ($h$) and co-analytic ($\overline{g}$) parts, the critical curve of $f$ is defined by the set of all points in $\mathbb{C}$ where $|h'(z)| = |g'(z)|$, i.e., $|nz^{n-1} + akz^{k-1}| = kb|z|^{k-1}$, which forms weighted lemniscate-type curves similar to those previously analyzed by Melman \cite{Melman} in both analytic and harmonic settings. Similar geometries arise where multiple analytic and co-analytic monomials interact. Thus, introducing a second analytic monomial term in $f$ provides a natural minimal test case for determining whether the harmonic analogue of Rouché's Theorem may be used to extend zero-counting techniques beyond families of harmonic trinomials (or those with otherwise circular geometries) to more general harmonic polynomials.

\begin{figure}[htbp]
 \centering
 \begin{minipage}[b]{0.45\textwidth}
 \centering
 \includegraphics[width=\textwidth]{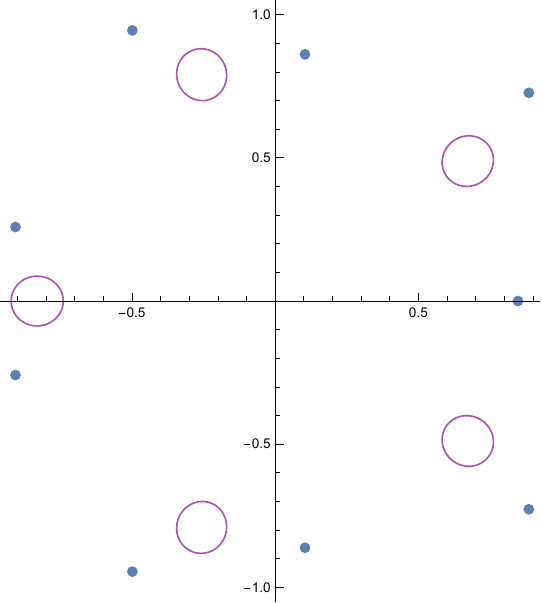}
 \caption{Critical Curve and Zeros of $f_1$}
 \label{CC_f1}
 \end{minipage}
 \hfill
 \begin{minipage}[b]{0.45\textwidth}
 \centering
 \includegraphics[width=\textwidth]{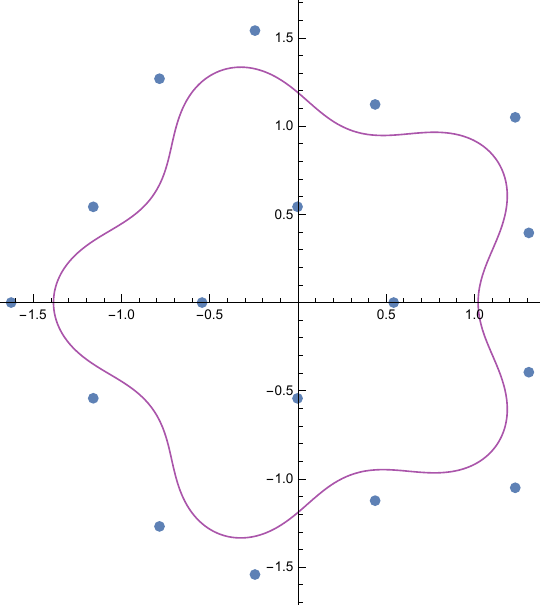}
 \caption{Critical Curve and Zeros of $f_2$}
 \label{CC_f2}
 \end{minipage}
\end{figure}

Because the behavior of harmonic polynomials with non-circular critical curves remains poorly understood, we address this gap by analyzing the complex-valued harmonic quadrinomial $f$. This paper contributes a generalization to count the zeros of a harmonic polynomial without circular symmetry and provides two annuli guaranteed to contain all zeros of $f$. We therefore focus our main theorems on (1) counting the zeros of $f$ (see Theorem \ref{TheoremA} and Theorem \ref{TheoremB}), and (2) identifying two annuli which must contain all zeros of $f$ (see Theorem \ref{Annulus}).\footnote{In this paper, we focus specifically on the case where $a \neq b$. The case where $a = b$ has recently been explored by Brooks et al. \cite{Brooks_Liechty}.} These theorems are given as follows:

\begin{theorem}
\label{TheoremA}
Let 
 \[
 f(z) = z^n + az^k + b\overline{z}^k - 1
 \]
 where $n$ and $k$ are natural numbers with $n>k$. Let $\epsilon > 0$. Suppose $0<a < (\frac{n-k}{n+k} - \epsilon)b$. Then, there exists $b_0$ such that, for all $b > b_0$, $f$ has $n+2k$ zeros.
 \end{theorem}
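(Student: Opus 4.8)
The plan is to write $f = h + \overline{g}$ with $h(z) = z^n + a z^k - 1$ and $g(z) = b z^k$, so that $h'(z) = n z^{n-1} + a k z^{k-1}$ and $g'(z) = b k z^{k-1}$. First I would locate the critical curve, where the Jacobian $|h'|^2 - |g'|^2$ vanishes. Dividing out $|z|^{k-1}$, this is the set $|n z^{n-k} + ak| = bk$, which in the variable $w = z^{n-k}$ is simply the circle $C$ centered at $-ak/n$ of radius $bk/n$. Since $a < \tfrac{n-k}{n+k}\,b < b$, the origin lies inside $C$, and I would check that the preimage of the closed disc bounded by $C$ under $z \mapsto z^{n-k}$ is a simply connected region $\Omega^-$ containing $0$, whose boundary $\Gamma$ is a single Jordan curve winding once around the origin. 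On $\Omega^-$ one has $|h'| < |g'|$, so $\Omega^-$ is exactly the sense-reversing region and its complement is sense-preserving.

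The counting then rests on two applications of the harmonic argument principle. Globally, on a circle $|z| = R$ with $R$ large, the term $z^n$ dominates the others (because $n > k$), so the winding of $f$ equals that of $z^n$; hence $N^+ - N^- = n$, where $N^+$ and $N^-$ denote the total numbers of sense-preserving and sense-reversing zeros. Because the Rouché estimate below will force $f \neq 0$ on $\Gamma$, every zero is either sense-preserving or sense-reversing, and by construction every sense-reversing zero lies in $\Omega^-$; it therefore remains only to evaluate the change in argument of $f$ along $\Gamma$. If this equals $-2\pi k$, then the harmonic argument principle gives $N^- = k$, and combining with $N^+ - N^- = n$ yields a total of $N^+ + N^- = n + 2k$ zeros.

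To compute the winding of $f$ on $\Gamma$, I would compare $f$ with its sense-reversing part $F = b\overline{z}^k$ via the topological (dog-on-a-leash) form of Rouché's theorem, which requires only the pointwise bound $|z^n + az^k - 1| < b|z|^k$ on $\Gamma$. Writing $|z^n| = |z|^k\,|w|$ and using that $|w| \le \tfrac{(a+b)k}{n}$ on $C$, this reduces to $|w| + a + |z|^{-k} < b$, whose leading part is exactly $a\,\tfrac{n+k}{n} < b\,\tfrac{n-k}{n}$, i.e. $a < \tfrac{n-k}{n+k}\,b$. This is precisely why the constant $\tfrac{n-k}{n+k}$ appears: the hypothesis $a < (\tfrac{n-k}{n+k} - \epsilon)\,b$ leaves a gap of size proportional to $b$ which, for $b$ larger than some $b_0$, swallows the bounded term $a$ and the vanishing term $|z|^{-k}$ arising from the constant $-1$. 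Granting the inequality, $f$ and $F = b\overline{z}^k$ have the same winding along $\Gamma$, namely $-2\pi k$ since $\Gamma$ circles the origin once, which completes the count.

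The main obstacle is the geometric and topological step: verifying rigorously that $\Gamma$ is a single simple closed curve encircling the origin exactly once and that $\Omega^-$ is its interior, so that the harmonic argument principle applies cleanly with all interior zeros sense-reversing. The analytic estimate on $\Gamma$ is delicate only at the point of $C$ farthest from the origin, where $|w|$ is maximal; that worst case is exactly what forces the threshold $\tfrac{n-k}{n+k}$, so care is needed to make the bound uniform over all of $\Gamma$ rather than at a typical point.
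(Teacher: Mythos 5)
Your proposal is correct and takes essentially the same route as the paper: the same splitting $h(z)=z^n+az^k-1$, $g(z)=bz^k$, the same bounds $\tfrac{k(b-a)}{n}\le |z|^{n-k}\le \tfrac{k(b+a)}{n}$ on the critical curve, the same Rouch\'e comparison of $f$ with $b\overline{z}^k$ there --- yielding the identical threshold $a<\tfrac{n-k}{n+k}\,b$ with the $|z|^{-k}$ term absorbed for $b>b_0$ exactly as in Lemma \ref{helpful_computation} --- and the same global count from the dominance of $z^n$ (Lemma \ref{order_of_zeros}). Your only departure is presentational: you carry out explicitly, via $w=z^{n-k}$ and winding numbers, the topological bookkeeping (that the critical curve is a single Jordan curve bounding the sense-reversing region, along which $f$ winds $-k$ times) that the paper delegates to its harmonic Rouch\'e theorem and Lemma \ref{k_zeros}.
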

 
\begin{theorem}
\label{TheoremB}
Let 
 \[
 f(z) = z^n + az^k + b\overline{z}^k - 1
 \]
 where $n$ and $k$ are natural numbers with $n > k$. Suppose $0 < b < (\frac{n-k}{n+k} -\epsilon)a$. Then there exists an $a_0$ such that, for all $a > a_0$, $f$ has $n$ zeros.
\end{theorem}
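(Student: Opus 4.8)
The plan is to write $f = h + \overline{g}$ with analytic parts $h(z) = z^n + az^k - 1$ and $g(z) = bz^k$, and to exploit the fact that the harmonic analogue of Rouché's Theorem controls only the \emph{signed} zero count of $f$, while its \emph{total} count differs from the signed one by twice the number of sense-reversing zeros. Since $h$ is analytic of degree $n$, it has exactly $n$ zeros by the Fundamental Theorem of Algebra. On a circle $|z| = R$ with $R$ large we have $|g(z)| = bR^k < |h(z)|$ because $h$ grows like $R^n$ and $n > k$; the harmonic Rouché theorem then gives that $f$ and $h$ have the same signed number of zeros inside $|z| = R$, namely $N^+ - N^- = n$, where $N^+$ and $N^-$ denote the numbers of sense-preserving and sense-reversing zeros. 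Thus the theorem reduces to proving $N^- = 0$ once $a$ is large.

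To locate the sense-reversing zeros, I would compute the Jacobian $J_f = |h'|^2 - |g'|^2$ and identify the sense-reversing region $\{J_f < 0\}$. After dividing by $|z|^{k-1}$ this is exactly $\{\,|nz^{n-k} + ak| < bk\,\}$, i.e. the set where $w = z^{n-k}$ lies in the disk $D$ of radius $bk/n$ centered at $-ak/n$. Because $a > b$ under the hypothesis, $D$ does not contain the origin, so the sense-reversing region is a union of $n-k$ bounded ``islands'' on which $|z|$ is large (of order $a^{1/(n-k)}$) as $a \to \infty$. Every sense-reversing zero of $f$ must lie in the closure of this region, so it suffices to show $f$ has no zeros there.

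The decisive estimate is the following. On the islands $z^{n-k} = -ak/n + \delta$ with $|\delta| \le bk/n$, whence $z^n = z^{n-k}z^k$ and
\[
f(z) = \Bigl(a\tfrac{n-k}{n} + \delta\Bigr)z^k + b\overline{z}^k - 1 .
\]
Here $\bigl|a\tfrac{n-k}{n} + \delta\bigr| \ge a\tfrac{n-k}{n} - \tfrac{bk}{n}$, and the hypothesis $b < (\tfrac{n-k}{n+k} - \epsilon)a$ is precisely what forces this quantity to exceed $b$ by a fixed positive multiple of $a$, since $a(n-k) - bk > bn \iff b < \tfrac{n-k}{n+k}a$. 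Consequently the analytic term $(a\tfrac{n-k}{n}+\delta)z^k$ strictly dominates $b\overline{z}^k$, and as $|z|^k \to \infty$ the constant $-1$ is absorbed, giving $|f| \ge \bigl(|a\tfrac{n-k}{n}+\delta| - b\bigr)|z|^k - 1 > 0$ throughout the closed sense-reversing region once $a > a_0$. This yields $N^- = 0$, and combined with $N^+ - N^- = n$ it gives that $f$ has exactly $N^+ = n$ zeros.

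The step I expect to be the main obstacle is making the dominance estimate uniform over the entire sense-reversing region rather than only at its ``center'' $z^{n-k} = -ak/n$: one must control the perturbation $\delta$ simultaneously with the lower bound on $|z|$ (needed to discard the $-1$) across all $n-k$ islands, and verify that the threshold $a_0$ can be taken to depend only on $n,k,\epsilon$ and not on the individual point. The factor $\frac{n-k}{n+k}$ enters exactly through the worst case $|\delta| = bk/n$ pointing opposite to $a\frac{n-k}{n}$, so the $\epsilon$-margin in the hypothesis is what guarantees a strict, $a$-independent gap. An equivalent route to $N^- = 0$, more in the spirit of applying Rouché directly on the critical curve, is to evaluate the harmonic argument principle on each critical-curve loop: the same dominance shows that $f$ winds like the analytic term $(a\frac{n-k}{n}+\delta)z^k$, which has zero net winding around a loop whose image under $z \mapsto z^{n-k}$ does not enclose the origin, forcing the enclosed signed count, and hence the sense-reversing count, to vanish.
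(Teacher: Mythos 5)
Your proof is correct, but it takes a genuinely different route from the paper's. The paper proves Theorem~\ref{TheoremB} by symmetry with Theorem~\ref{TheoremA}: since the critical-curve bounds in \eqref{eq:crit-bounds} depend only on $a|z|^k$ and $b|z|^k$, swapping $a$ and $b$ in the Theorem~\ref{TheoremA} estimates shows $|az^k|$ dominates $|z^n + b\overline{z}^k - 1|$ along the entire critical curve once $a > a_0$, and then Lemma~\ref{no_zeros} applies the harmonic Rouché theorem on the critical curve itself (the dominant function $az^k$ vanishes only at the origin, which for $a > b$ lies in the sense-preserving region, so the enclosed signed count is zero) to conclude there are no sense-reversing zeros. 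You bypass Rouché on the critical curve entirely: on the sense-reversing islands where $z^{n-k} = -\frac{ak}{n} + \delta$ with $|\delta| \le \frac{bk}{n}$, the substitution $z^n = z^{n-k}z^k$ collapses $f$ to $\bigl(a\frac{n-k}{n}+\delta\bigr)z^k + b\overline{z}^k - 1$, and your hypothesis gives the uniform gap $\bigl|a\frac{n-k}{n}+\delta\bigr| - b \ge \frac{\epsilon(n+k)}{n}a$ (exactly the computation $a(n-k) - b(n+k) \ge \epsilon(n+k)a$), while $|z|^{n-k} \ge \frac{k(a-b)}{n} \ge \frac{2k^2}{n(n+k)}a$ keeps $|z|$ large enough to absorb the $-1$; this shows $f \neq 0$ pointwise on the closed sense-reversing region, and combining with the signed count $N^+ - N^- = n$ (the paper's Lemma~\ref{order_of_zeros}) yields exactly $n$ zeros. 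Your route buys an explicit threshold $a_0$ depending only on $n$, $k$, $\epsilon$, and it dispenses with both Lemma~\ref{helpful_computation} and the $\lambda < \frac{99}{100}$ bookkeeping inherited from Theorem~\ref{TheoremA}; the uniformity worry you flag resolves exactly as you sketch, since $|\delta| \le \frac{bk}{n}$ and the lower bound on $|z|$ hold simultaneously across all $n-k$ islands. What your approach forgoes is the paper's methodological point---running Rouché directly on a non-circular critical curve---and to be complete you should record two small facts you use implicitly: that the sense-reversing set is precisely the $z \mapsto z^{n-k}$ preimage of a disk avoiding the origin (so it consists of $n-k$ bounded components on which your parametrization is valid), and that non-vanishing of $f$ on the closure of that set rules out singular zeros on the critical curve, which is the hypothesis needed to apply the harmonic Rouché theorem on the large circle for the signed count.
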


\begin{theorem}
\label{Annulus}
 Let 
 \[
 f(z) = z^n + az^k + b\overline{z}^k - 1
 \]
where a, b $\in$ $\mathbb{R}^+$ satisfy $|b - a| > 2$ and $n$, $k$ $\in$ $\mathbb{N} $ with $n > k$ . Suppose without loss of generality $b > a$. Then there exist two annuli where the inner annulus, defined by radii $R_1$ and $R_2$, contains $k$ zeros and the outer annulus, defined by $R_3$ and $R_4$, contains all remaining zeros of $f$. Moreover, the radii are given by:

\begin{align*}
R_1 &= \frac{1}{(b + a + 1)^{\frac{1} k}}, \\
R_2 &= \frac{1}{(b-a-1)^{\frac{1}{k}}}, \\
R_3 &= (b - a - 1)^{\frac{1}{n-k}}\text{, and} \\
R_4 &= (b + a + 1)^{\frac{1}{n-k}}. 
\end{align*}
\end{theorem}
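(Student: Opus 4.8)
The plan is to confine the zeros with pointwise dominant-term estimates on circles, count them with the harmonic argument principle, and finally convert the resulting signed counts into honest zero counts using the sense-preserving/sense-reversing decomposition. Write $f = h + \overline{g}$ with $h(z) = z^n + az^k - 1$ and $g(z) = bz^k$, so the Jacobian is $J_f = |h'|^2 - |g'|^2 = |z|^{2(k-1)}\bigl(|nz^{n-k}+ak|^2 - b^2k^2\bigr)$; the sign of the parenthesized factor identifies the sense-preserving region $\{|nz^{n-k}+ak| > bk\}$ and its complement, the sense-reversing region. Throughout I would use $b - a - 1 > 1$, which follows from $b - a > 2$, so that all four radii are well defined with $R_1 < R_2 < 1 < R_3 < R_4$.

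First I would show every zero lies in the two annuli by ruling out the three complementary regions with the triangle inequality. On $|z| = R \le R_1$ one has $R < 1$, hence $|z^n + az^k + b\overline{z}^k| \le R^n + (a+b)R^k < (a+b+1)R_1^k = 1$, so the constant term dominates and $f \neq 0$. On $|z| = R \ge R_4$, the function $\chi(R) := R^n - (a+b)R^k - 1$ satisfies $\chi(R_4) = R_4^k - 1 > 0$ and $\chi'(R) = R^{k-1}\bigl(nR^{n-k}-(a+b)k\bigr) > 0$ for $R \ge R_4$, so $z^n$ dominates and $f \neq 0$. For the gap $R_2 \le |z| \le R_3$ I would show the antianalytic term dominates by proving $\psi(R) := (b-a)R^k - R^n - 1 > 0$ there: direct computation gives $\psi(R_2) = R_2^k(1 - R_2^{n-k}) > 0$ and $\psi(R_3) = R_3^k - 1 > 0$, and since $\psi'(R) = R^{k-1}\bigl(k(b-a) - nR^{n-k}\bigr)$ changes sign only once (from $+$ to $-$), $\psi$ is unimodal and attains its minimum on $[R_2,R_3]$ at an endpoint, forcing $\psi > 0$ on the whole interval. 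Hence $f$ has no zeros in $|z| \le R_1$, in $R_2 \le |z| \le R_3$, or in $|z| \ge R_4$, so every zero lies in the inner annulus $R_1 < |z| < R_2$ or the outer annulus $R_3 < |z| < R_4$.

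Next I would count the inner zeros. Because $|z| = R_1$ and $|z| = R_2$ are zero-free, the zeros in $|z| < R_2$ are exactly the inner-annulus zeros. On $|z| = R_2$ the estimate $R_2^n + aR_2^k + 1 < bR_2^k$, which reduces to $R_2^n < R_2^k$ and hence to $R_2 < 1$, shows $|f - b\overline{z}^k| < |b\overline{z}^k|$, so the harmonic analogue of Rouché's theorem gives that $f$ and $b\overline{z}^k$ have the same signed zero count in $|z| < R_2$, namely $N^+ - N^- = -k$ (the winding number of $b\overline{z}^k = bR_2^k e^{-ik\theta}$ is $-k$). To upgrade this to an honest count of $k$, I would argue that the closed disk $|z| \le R_2$ lies in the sense-reversing region: since $R_2 < 1$ one has $|nz^{n-k}+ak| \le nR_2^{n-k}+ak$ there, so it suffices to verify $nR_2^{n-k}+ak < bk$, i.e.\ $R_2^{n-k} < (b-a)k/n$; granting this, $J_f < 0$ throughout, so $N^+ = 0$ and the inner annulus contains exactly $k$ zeros. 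Since by the first step all other zeros must lie in the outer annulus, that annulus automatically contains all remaining zeros, completing the argument.

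The step I expect to be the main obstacle is the passage from the signed count $N^+ - N^- = -k$ to the exact count $k$, that is, certifying that no sense-preserving zero hides in the inner annulus. This is precisely where $|b-a| > 2$ must be used: it must control $R_2 = (b-a-1)^{-1/k}$ well enough that the sense-reversing inequality $R_2^{n-k} < (b-a)k/n$ holds, keeping the non-circular critical curve $\{|nz^{n-k}+ak| = bk\}$ outside the closed disk $|z| \le R_2$. Verifying this containment is delicate in the borderline regime where $b-a$ is only slightly larger than $2$ while $n-k$ is large, and it is exactly this part of the argument that requires understanding the geometry of the critical curve rather than relying on a single circular contour; I would expect the write-up to spend most of its effort pinning down this inequality (and, if it fails to hold uniformly, to argue directly that the thin sense-preserving sliver near $|z| = R_2$ carries no zeros).
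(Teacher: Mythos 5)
Your overall architecture is the same as the paper's: decompose $f = h + \overline{g}$ with $h(z) = z^n + az^k - 1$, $g(z) = bz^k$; confine zeros by triangle-inequality dominance (constant term inside $R_1$, the term $b\overline{z}^k$ on the gap $[R_2,R_3]$, the term $z^n$ beyond $R_4$); apply harmonic Rouché on a circle to get the signed count $-k$; and convert that to an honest count of $k$ via sense-reversal. Your treatment of the three exclusion regions is correct and in places cleaner than the paper's: the unimodality argument for $\psi(R) = (b-a)R^k - R^n - 1$, with $\psi(R_2) = R_2^k(1-R_2^{n-k}) > 0$ and $\psi(R_3) = R_3^k - 1 > 0$, handles in one stroke what the paper does by splitting at $|z| = 1$, and your monotonicity check for $\chi$ beyond $R_4$ is tighter than the paper's version. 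Running Rouché on $|z| = R_2$ instead of the paper's choice $|z| = 1$ is also fine, since your dominance there reduces correctly to $R_2^n < R_2^k$.

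The genuine gap is exactly where you predicted it, and it is worse than "delicate": your proposed certification that the closed disk $|z| \le R_2$ is sense-reversing, namely $R_2^{n-k} < k(b-a)/n$, is \emph{false} for admissible parameters, not merely hard to verify. Take $n = 5$, $k = 2$, $a = 0.05$, $b = 2.06$, so $b - a = 2.01 > 2$: then $R_2^{n-k} = (1.01)^{-3/2} \approx 0.985$ while $k(b-a)/n = 0.804$. Worse, the critical curve $\{|nz^{n-k} + ak| = bk\}$ then satisfies $0.804 \le |z|^3 \le 0.844$, i.e.\ $0.930 \le |z| \le 0.945$, so it lies \emph{entirely inside} $|z| < R_2 \approx 0.995$; the disk $|z| \le R_2$ genuinely contains a sense-preserving annular sliver, and your fallback (showing that sliver is zero-free) is not optional but mandatory in this regime — and your proposal only gestures at it. To close it one would need, e.g., that $\psi(r) > 0$ for all $r \in [\rho, R_2]$ where $\rho = (k(b-a)/n)^{1/(n-k)}$ bounds the sense-preserving points from below; by your own unimodality observation this reduces to proving $\psi(\rho) > 0$ whenever $\rho < R_2$, an inequality you never establish. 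For comparison, the paper performs this same conversion step on the unit circle and justifies "the entire region is sense-reversing" only under the side condition $(b-a) > n/k$ introduced mid-proof, which likewise does not follow from $|b-a| > 2$ — so you have correctly located the one genuinely subtle point of the theorem, but as written your argument does not resolve it.
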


In Section \ref{Background}, we introduce the necessary definitions and results for our arguments. Three important results concern the critical curve, the order of zeros for a harmonic function, and Rouché's Theorem for Harmonic Functions. We proceed in Section \ref{Results} to prove each of our main theorems. Section \ref{Conclusion} provides a brief conclusion of our results and discusses possibilities for future research.

\section{Background}\label{Background}
Recall that any real-valued function $u(x, y)$ is \textit{harmonic} if it satisfies Laplace's equation:
\[
\Delta u = \frac{\partial^2 u}{\partial x^2} + \frac{\partial^2 u}{\partial y^2} = 0.
\]
A complex-valued function is harmonic if its real and imaginary parts are harmonic.

As Duren \cite{Duren} proved, any \textit{complex}-valued harmonic function $f$ in a simply connected domain $D \subset \mathbb{C}$ may be written as $f = h + \overline{g}$, where $h$ and $g$ are analytic in $D$. We call $h$ the \textit{analytic} part of $f$ and $\overline{g}$ the \textit{co-analytic} part of $f$.

By separating a complex-valued harmonic function $f$ into its analytic ($h$) and co-analytic ($\overline{g}$) parts, we define the \textit{sense-preserving} region of $f$ as the region in the plane where $|h'(z)| > |g'(z)|$. Similarly, the \textit{sense-reversing} region of $f$ is wherever $|h'(z)| < |g'(z)|$ \cite{Brilleslyper}.

While investigating these regions, Brilleslyper et al. \cite{Brilleslyper} discussed the concept of a \textit{critical curve} to describe the curve(s) in the plane separating the sense-preserving and sense-reversing regions. The critical curve is paramount in proving our zero-counting theorems (Theorem \ref{TheoremA} and Theorem \ref{TheoremB}) and is defined as follows:

\begin{definition}(Critical Curve)
The set of all $z \in \mathbb{C}$ where $|h'(z)| = |g'(z)|$ will be called the critical curve of $f$.
\end{definition}

Another important concept central to both the zero-counting and zero-locating theorems is the \textit{order of the zeros}. It can be shown that an equivalent definition to Duren's \cite{Duren} for the order of a zero states:

\begin{definition}(Order of the Zeros)
If $z_0$ is in the sense-preserving region, the order is the smallest $n \geq$ 1 such that $h^{(n)}(z_0) \neq 0.$ If $z_0$ is in the sense-reversing region, the order is $-m$, where $m \geq 1$ is the smallest such that $g^{(m)}(z_0) \neq 0$.
\end{definition}

Hence, a zero in the sense-preserving region of $f$ is assigned \textit{positive} order. Similarly, a zero in the sense-reversing region is assigned \textit{negative} order. Thereby, if the sum of the orders of the zeros in the plane for $f$ is $x$ and there are $y$ zeros of negative order, then there are $x + y$ zeros of positive order. Thus, the sum of the orders of the zeros is $x$, but there are $x + 2y$ zeros in total.

We first recall Rouché's Theorem for Analytic Functions \cite{Saff}.

\begin{theorem} (Rouché's Theorem for Analytic Functions)
Suppose that $p$ and $q$ are analytic in an open set containing a simple closed contour $C$ and its interior. If $|p(z)| > |q(z)|$ for all $z$ on \(C\), then $p$ and $p + q$ have the same number of zeros inside $C$.

\end{theorem}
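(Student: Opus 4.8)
The plan is to deduce this from the argument principle, which counts the zeros of an analytic function via a contour integral of its logarithmic derivative. First I would record the inequality that makes everything well-posed: on $C$, the reverse triangle inequality gives $|p+q| \ge |p| - |q| > 0$, and the hypothesis itself gives $|p| > |q| \ge 0$. Hence neither $p$ nor $p+q$ vanishes anywhere on $C$, so the two zero-counting integrals below are defined, and each counts zeros lying strictly inside $C$, with multiplicity.

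Next I would invoke the argument principle: for a function $F$ analytic on an open set containing $C$ and its interior with no zeros on $C$, the number of interior zeros $N(F)$ (with multiplicity) equals $\frac{1}{2\pi i}\oint_C \frac{F'(z)}{F(z)}\,dz$, which is precisely the winding number of the image curve $F\circ C$ about the origin (taking $C$ positively oriented). Applying this to both $p$ and $p+q$, the theorem reduces to showing that these two winding numbers coincide.

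The key step is the factorization $p + q = p\,(1 + q/p)$ on $C$, valid since $p$ is nonvanishing there. Because the logarithmic derivative is additive over products, $\frac{(p+q)'}{p+q} = \frac{p'}{p} + \frac{g'}{g}$ with $g = 1 + q/p$, so integrating gives $N(p+q) = N(p) + \frac{1}{2\pi i}\oint_C \frac{g'}{g}\,dz$. The remaining integral is the winding number of $g\circ C$ about $0$. But $|g(z) - 1| = |q(z)/p(z)| < 1$ for every $z$ on $C$, so the image $g(C)$ lies entirely in the open disk of radius $1$ centered at $1$, a simply connected region excluding the origin; a closed curve confined there cannot wind around $0$, so this winding number is $0$. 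Therefore $N(p+q) = N(p)$, which is the claim.

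The main obstacle is justifying the two analytic facts underlying the argument: the argument principle itself (standard, and reasonably taken as a prerequisite), and the assertion that a closed curve lying in a disk missing the origin has winding number zero about the origin. The latter is where care is needed; it holds because a holomorphic branch of $\log w$ exists on that disk, whence $\oint_C \frac{g'}{g}\,dz = \oint_C (\log g)'\,dz = 0$ by the fundamental theorem of calculus along a closed contour. An alternative route avoids the factorization: set $F_t = p + t\,q$ for $t \in [0,1]$ and note $|F_t| \ge |p| - |q| > 0$ on $C$ for all $t$, so $N(t) = \frac{1}{2\pi i}\oint_C F_t'/F_t\,dz$ is a continuous integer-valued function of $t$, hence constant, with $N(0)=N(p)$ and $N(1)=N(p+q)$.
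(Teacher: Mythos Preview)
Your proof is correct and follows the standard argument-principle route found in most complex analysis texts: the factorization $p+q = p(1+q/p)$ together with the observation that $1+q/p$ maps $C$ into the disk $|w-1|<1$, so its winding number about the origin vanishes. The homotopy variant you sketch at the end is equally valid.

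Note, however, that the paper does not actually prove this theorem. It is stated purely as background, with a citation to Saff and Snider's textbook, and is immediately followed by an illustrative example rather than a proof. So there is no paper proof to compare against; your argument would serve perfectly well were a justification needed.
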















The harmonic analogue of Rouché's Theorem counts zeros with respect to their order and multiplicity, and is proved using a similar approach to the proof in the analytic setting by Brown and Churchill \cite{Brown}.

\begin{theorem}(Rouché's Theorem for Harmonic Functions)
Suppose that $f$ and $g$ are both harmonic in a simply connected domain $D \subset \mathbb{C}$. Let $C$ be a simple closed curve contained in $D$ not passing through a zero and let $\Omega$ be the open bounded region created by $C$. Suppose that $f$ and $g$ have no singular zeros in $D$ and let the sum of the orders of the zeros of $f$ and $g$ in $C$ be the sum of the orders of the zeros of $f$ and $g$ in $\Omega$ (counting multiplicities). If $|f(z)| > |g(z)|$ at each point on $C$, then the sum of the orders of the zeros of $f$ is equal to the sum of the orders of the zeros of $f + g$.
\end{theorem}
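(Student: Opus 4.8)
The plan is to deduce the harmonic Rouché theorem from the \emph{argument principle for harmonic functions}, which I would take as the one nontrivial ingredient (it is the harmonic analogue of the fact that, for an analytic function, the winding number of the boundary image about the origin counts the enclosed zeros; see Duren \cite{Duren}). Precisely, for a harmonic function $F$ that is nonzero on $C$ and has no singular zeros in $\Omega$, let $w(F)$ denote the winding number about the origin of the image curve as $z$ traverses $C$ once positively. The argument principle asserts that $w(F)$ equals the sum of the orders of the zeros of $F$ in $\Omega$, with each sense-preserving zero contributing its positive order and each sense-reversing zero its negative order, exactly as in the Definition of order of zeros above. I would invoke this result for both $f$ and $f+g$, so the heart of the proof reduces to showing these two harmonic functions produce the same winding number.

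For that I would use a straightforward homotopy. Define $F_s = f + s\,g$ for $s \in [0,1]$, noting that each $F_s$ is harmonic since it is a real-linear combination of the harmonic functions $f$ and $g$. The hypothesis $|f(z)| > |g(z)|$ on $C$ gives, for every $z \in C$ and every $s \in [0,1]$,
\[
|F_s(z)| \;=\; |f(z) + s\,g(z)| \;\geq\; |f(z)| - s\,|g(z)| \;\geq\; |f(z)| - |g(z)| \;>\; 0,
\]
so no member of the family vanishes anywhere on $C$. Consequently the assignment $(s,z) \mapsto F_s(z)$ restricted to $[0,1] \times C$ is a continuous map into $\mathbb{C} \setminus \{0\}$, i.e.\ a homotopy of closed curves in the punctured plane.

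Since the winding number about the origin is a homotopy invariant of closed curves in $\mathbb{C}\setminus\{0\}$ — equivalently, $s \mapsto w(F_s)$ is an integer-valued continuous function of $s$ and hence constant — I conclude $w(F_0) = w(F_1)$, that is, $w(f) = w(f+g)$. Applying the harmonic argument principle to each endpoint then identifies $w(f)$ with the sum of the orders of the zeros of $f$ in $\Omega$ and $w(f+g)$ with the sum of the orders of the zeros of $f+g$ in $\Omega$; the equality of winding numbers therefore yields equality of these two sums, which is the assertion of the theorem. Here the hypotheses that $C$ avoids every zero (so that all zeros counted ``in $C$'' lie in the open region $\Omega$) and that $f$ and $g$ have no singular zeros are precisely what keep each order well defined and the argument principle applicable at both endpoints of the homotopy.

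I expect the only real obstacle to be the argument principle for harmonic functions itself: unlike the analytic case it cannot simply be read off from a contour integral of $F'/F$, and a self-contained proof must account separately for the sense-preserving and sense-reversing behavior and verify that the local degree of $F$ at an isolated nonsingular zero equals the signed order assigned to it — in particular that a sense-reversing zero of order $-m$ contributes winding number $-m$. That local-degree computation is where the genuine work lies; if a fully self-contained treatment is desired it would be carried out there, and otherwise I would cite the established harmonic argument principle and let the homotopy argument above supply the remainder.
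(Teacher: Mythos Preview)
Your approach is correct and matches what the paper indicates: the paper does not give a self-contained proof but simply remarks that ``this theorem is proved similarly to the proof of Rouché's Theorem by Brown and Churchill,'' and your homotopy $F_s=f+sg$ combined with the harmonic argument principle is precisely that classical argument transported to the harmonic setting. One small caveat worth noting: to apply the argument principle at the endpoint $s=1$ you need $f+g$ (not just $f$ and $g$ separately) to have no singular zeros in $\Omega$, which is not literally guaranteed by the stated hypotheses; this is a wrinkle in the theorem statement itself rather than in your method, but you should flag it rather than assert that the hypotheses on $f$ and $g$ alone suffice.
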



We see Rouché's Theorem is a powerful tool for counting zeros within a closed curve. Notably, while Rouché's Theorem has typically been applied to simple closed loops such as circles, it may be used on any simple closed curve in $\mathbb{C}$ on which its hypotheses are satisfied.

Returning to $f$, it is not immediately clear how Rouché's Theorem can be used to count its zeros. However, if Rouché's Theorem can be applied to determine (1) the sum of the orders of the zeros of $f$, as well as (2) how many zeros lie within the sense-reversing region of $f$ (i.e., within its critical curve), then the total number of zeros of $f$ follows quickly. The first of these quantities is straightforward to obtain and is proved as Lemma \ref{order_of_zeros} in Section \ref{Results}.

The second quantity requires more attention. Consider once again Figure \ref{CC_f1} and Figure \ref{CC_f2}, given in Section \ref{Introduction}. We see the critical curve of $f$ may take different shapes and may have one or more closed loops. This shows that applying a traditional Rouché argument on circles of constant modulus is insufficient to count the zeros that lie within the critical curve.

Accordingly, the first pair of theorems in Section \ref{Results} is dedicated to counting the zeros of $f$ by applying Rouché's Theorem for Harmonic Functions directly to the critical curve of $f$. Our final theorem identifies two annuli that contain all zeros of $f$.

\section{Results}\label{Results}

In this section, we find the sum of the orders of the zeros in the plane, then the sum of the orders of the zeros in the critical curve, which counts the zeros in the sense-reversing region. We solve for the number of zeros in the sense-preserving region and therefore obtain the total number. We then deduce two annuli that must contain all zeros in the plane.

We begin by proving our zero-counting theorems, Theorem \ref{TheoremA} and Theorem \ref{TheoremB}. We then proceed by proving our zero-locating theorem, Theorem \ref{Annulus}.

\subsection{Zero Counting}\label{Zero_Counting}

\subsubsection{Proof of Theorem \ref{TheoremA}}\label{Proof_Theorem_A}

We now introduce several lemmas necessary to prove Theorem \ref{TheoremA}. Lemma \ref{order_of_zeros} finds the sum of the orders of the zeros for $f$ in the plane is $n$. Lemma \ref{k_zeros} applies a Rouché's Theorem argument to find when $k$ zeros exist within the critical curve of $f$. Because we already know the sum of the orders of the zeros of $f$, by counting the zeros within the critical curve (i.e., sense-reversing region) of $f$, we may quickly deduce how many zeros exist outside the critical curve (i.e., sense-preserving region) of $f$. Together, this gives us the total number of zeros of $f$. The final lemma, Lemma \ref{helpful_computation}, is used to resolve a tedious computation used in the proof of Theorem \ref{TheoremA}.

\begin{lemma}
\label{order_of_zeros}
Suppose $n > k$. Then, the sum of the orders of the zeros for \(f(z) = z^n + az^k + b\overline{z}^k - 1\) in the plane is \(n\).
\end{lemma}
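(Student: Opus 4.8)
The plan is to apply Rouché's Theorem for Harmonic Functions on a large circle, comparing $f$ against its leading term $z^n$. First I would split $f = F + G$ with the dominant analytic piece $F(z) = z^n$ and the lower-order remainder $G(z) = az^k + b\overline{z}^k - 1$, noting both are harmonic and that the co-analytic part $b\overline{z}^k$ is absorbed into $G$. On the circle $C_R = \{z : |z| = R\}$ we then have $|F(z)| = R^n$, while the triangle inequality gives $|G(z)| \le aR^k + bR^k + 1 = (a+b)R^k + 1$.

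Since $n > k$, the inequality $R^n > (a+b)R^k + 1$ holds for all sufficiently large $R$; fix such an $R$. The strict domination $|F(z)| > |G(z)|$ on $C_R$ forces $f = F + G$ to be nonvanishing there, so no zero lies on the contour. By Rouché's Theorem for Harmonic Functions, the sum of the orders of the zeros of $f$ inside $C_R$ then equals the sum of the orders of the zeros of $F(z) = z^n$ inside $C_R$. But $z^n$ is analytic with a single zero of order $n$ at the origin, lying in the sense-preserving region, so its sum of orders is exactly $n$.

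It remains to verify that $C_R$ encloses every zero of $f$, so that the count inside the disk agrees with the count over the whole plane. This follows from the same estimate: for any $z$ with $|z| = \rho \ge R$ we have $|z^n| = \rho^n > (a+b)\rho^k + 1 \ge |az^k + b\overline{z}^k - 1|$, whence $f(z) \ne 0$. Thus all zeros of $f$ are confined to the open disk bounded by $C_R$, and the sum of their orders is $n$.

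I expect the main obstacle to be checking the hypotheses of the harmonic Rouché theorem rather than the estimate, which is routine. In particular, one must confirm that $f$ has no singular zeros (zeros lying on the critical curve, where the Jacobian degenerates) before the theorem applies; the strict inequality on $C_R$ disposes of zeros on the contour and reconciles the statement's bookkeeping condition relating zeros in $\Omega$ to zeros on $C$, while the enclosed region is handled by the regularity of this polynomial family. Choosing $R$ uniformly large is what ties the local count inside $C_R$ to the global sum of orders in the plane.
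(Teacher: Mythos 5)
Your proposal is correct and follows essentially the same route as the paper: both decompose $f$ into the dominant term $z^n$ and the remainder $az^k + b\overline{z}^k - 1$, apply Rouch\'e's Theorem for Harmonic Functions on a circle of sufficiently large radius, and conclude the sum of the orders of the zeros is $n$. You supply details the paper elides (the explicit triangle-inequality estimate, the check that all zeros lie inside $C_R$, and the no-singular-zeros hypothesis), but the argument is the same.
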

 
\begin{proof}
Let $p(z) = z^n$ and $q(z) = az^k + b\overline{z}^k - 1$. Because \(z^n\) is the only term with the largest exponent, \(n\), we clearly find for $z$ sufficiently large that $|p(z)| > |q(z)|$. By applying Rouché's Theorem to a circle, $C$, of sufficiently large radius, we find there are \(n\) zeros of \(f\) within $C$. Since $C$ is any arbitrarily large contour in the plane, we conclude the sum of the orders of the zeros of $f$ is $n$.
\end{proof}

\begin{lemma}
\label{k_zeros}
Let \(f(z) = z^n + az^k + b\overline{z}^k - 1\), where $n$ and $k$ are natural numbers with $n > k$. Suppose \(a < b\). Define $h(z) = z^n + az^k - 1$ and $g(z) = bz^k$ such that \(f(z) = h(z) + \overline{g(z)}\). Then, whenever $|g(z)| > |h(z)|$ along the entire critical curve of $f$, there are \(k\) zeros in the sense-reversing region of \(f\).
\end{lemma}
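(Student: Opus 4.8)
The plan is to apply Rouché's Theorem for Harmonic Functions directly to the critical curve of $f$, using the co-analytic part $\overline{g}$ as the dominant comparison function. First I would compute the critical curve explicitly. Since $h'(z) = nz^{n-1} + akz^{k-1}$ and $g'(z) = bkz^{k-1}$, the defining condition $|h'(z)| = |g'(z)|$ reduces, for $z \neq 0$, to $|nz^{n-k} + ak| = bk$. Substituting $w = z^{n-k}$, this becomes the circle $|nw + ak| = bk$ in the $w$-plane, i.e. the boundary of a disk centered at $-ak/n$ of radius $bk/n$. Because $a < b$, the origin $w = 0$ lies in the interior of this disk, so its preimage under $z \mapsto z^{n-k}$ is a single simple closed curve enclosing the origin, and the sense-reversing region $\{z : |nz^{n-k} + ak| < bk\}$ is precisely the connected region it bounds, which contains $z = 0$. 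Verifying that this critical curve is a \emph{single} simple closed loop rather than several disjoint loops is essential, since Rouché's Theorem for Harmonic Functions requires $C$ to be a simple closed curve; I expect the topological bookkeeping of this preimage to be the main obstacle, as it is exactly the feature that distinguishes non-circular critical curves from the circular case.

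With the curve $C$ in hand, I would invoke the hypothesis $|g(z)| > |h(z)|$ on $C$. Since $|\overline{g(z)}| = |g(z)|$, this reads $|\overline{g}| > |h|$ pointwise on $C$; in particular $|f| = |h + \overline{g}| \geq |\overline{g}| - |h| > 0$, so $f$ does not vanish on $C$, and likewise $|\overline{g}| > |h| \geq 0$ shows $\overline{g}$ does not vanish on $C$. Hence $C$ avoids the zeros of both functions. I would then apply the harmonic Rouché theorem with $\overline{g}$ in the role of the dominant function and $h$ in the role of the subordinate one: taking $F = \overline{g}$ and $G = h$, we have $F + G = f$ and $|F| > |G|$ on $C$, so the sum of the orders of the zeros of $\overline{g}$ inside $C$ equals the sum of the orders of the zeros of $f$ inside $C$. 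Here I would also check the remaining hypotheses, namely that $f$ and $\overline{g}$ are harmonic (both are, being sums of analytic and co-analytic terms) and that the relevant zeros are non-singular, which holds for $f$ since its zeros lie off the critical curve.

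The final step is to evaluate the left-hand side directly. The only zero of $\overline{g}(z) = b\overline{z}^k$ is at the origin, which we have already placed inside $C$ and inside the sense-reversing region. By the definition of the order of a zero, since the smallest $m$ with $(bz^k)^{(m)}(0) \neq 0$ is $m = k$, this zero has order $-k$, so the sum of the orders of the zeros of $\overline{g}$ inside $C$ equals $-k$. Transferring this equality through Rouché, the sum of the orders of the zeros of $f$ inside $C$ is likewise $-k$. Because every zero in the sense-reversing region carries negative order, the total count taken with multiplicity is the absolute value of this sum, namely $k$, which is exactly the claim.
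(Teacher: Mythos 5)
Your proposal is correct and follows essentially the same route as the paper: place the zero of $\overline{g}(z) = b\overline{z}^k$ (the origin) inside the sense-reversing region via the reduced inequality $|nz^{n-k} + ak| < bk$, then apply Rouch\'e's Theorem for Harmonic Functions on the critical curve with $\overline{g}$ dominant to transfer the sum of orders $-k$ to $f$. In fact you supply details the paper's proof leaves implicit --- verifying via the $w = z^{n-k}$ substitution that the critical curve is a single simple closed curve bounding a connected sense-reversing region, checking that $f$ and $\overline{g}$ are nonzero on the curve, and computing the order $-k$ explicitly --- so your write-up is, if anything, the more complete argument.
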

 
\begin{proof}
Recall that the sense-reversing region of $f$ is defined by $|h'(z)| < |g'(z)|$. Differentiating gives $|h'(z)| = |nz^{n-1} + kaz^{k-1}|$ and $|g'(z)| = |kbz^{k-1}|$. Therefore, the sense-reversing region of $f$ is given by

\[
|nz^{n-1} + kaz^{k-1}| < |kbz^{k-1}|.
\]

By multiplying each side by \(|z^{-k+1}|\), we find
\[
|nz^{n-k} + ka| < bk.
\]

Examining the neighborhood around the origin of the complex plane, we find that because $nz^{n-k}$ is continuous and because $ak < bk$, there exists an $\epsilon$-neighborhood around the point $z = 0$ where

\[
|nz^{n-k} + ak| < bk
\]
holds. Simplifying, we find the sense-reversing region of $f$ is given by $ak < bk$.

Thus, a neighborhood around the origin is within the sense-reversing region of $f$. Hence, we find by applying Rouché's Theorem that there are $k$ zeros in the sense-reversing region of $f$.
\end{proof}

\begin{lemma}
\label{helpful_computation}
Suppose $0 < a < \lambda b$ for $0 < \lambda < \frac{99}{100}$ to be determined. Assume $n$ and $k$ are natural numbers with $n > k$. Then, for all \(\epsilon > 0\) there exists some $b_0$ such that, for all \(b > b_0\), \(\left(\frac{k(b - a)}{n}\right)^{-\frac{k}{n-k}} < \epsilon \).
\end{lemma}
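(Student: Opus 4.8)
The plan is to recognize this as a routine limit estimate dressed up with parameters. Since $n > k$, the exponent $-\frac{k}{n-k}$ is strictly negative, so $\left(\frac{k(b-a)}{n}\right)^{-\frac{k}{n-k}}$ is a fixed negative power of a base that I will show grows without bound as $b \to \infty$. Raising a base that tends to $+\infty$ to a fixed negative power produces a quantity tending to $0$, which is exactly the claim; the task is therefore to make this quantitative and produce an explicit $b_0$.

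First I would establish that the base $\frac{k(b-a)}{n}$ grows linearly in $b$. The hypothesis $a < \lambda b$ with $\lambda < \frac{99}{100} < 1$ gives $b - a > (1 - \lambda)b$, and since $1 - \lambda > \frac{1}{100} > 0$ this is a genuine positive lower bound that increases without bound. Hence $\frac{k(b-a)}{n} > \frac{k(1-\lambda)}{n}\,b$, so the base exceeds a fixed positive multiple of $b$.

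Next I would invert the target inequality. Because the exponent is negative, $\left(\frac{k(b-a)}{n}\right)^{-\frac{k}{n-k}} < \epsilon$ is equivalent, after raising both sides to the positive power $\frac{n-k}{k}$ and taking reciprocals, to $\frac{k(b-a)}{n} > \epsilon^{-\frac{n-k}{k}}$. Combining with the linear lower bound from the previous step, it suffices to choose $b$ large enough that $\frac{k(1-\lambda)}{n}\,b > \epsilon^{-\frac{n-k}{k}}$; explicitly, setting
\[
b_0 = \frac{n}{k(1-\lambda)}\,\epsilon^{-\frac{n-k}{k}}
\]
guarantees the desired inequality for every $b > b_0$.

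There is no serious obstacle here; the argument is a direct estimate. The only points requiring care are bookkeeping: tracking that the exponent is negative so that the map $x \mapsto x^{-\frac{k}{n-k}}$ is decreasing, which reverses the inequality direction when I invert, and confirming that $\lambda$ being bounded away from $1$ is precisely what prevents $b-a$ from degenerating and keeps the lower bound on the base both strictly positive and unbounded as $b$ grows.
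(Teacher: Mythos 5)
Your proof is correct and follows essentially the same route as the paper: both arguments hinge on the bound $b - a > (1-\lambda)b$ from $\lambda < \frac{99}{100}$, making the base $\frac{k(b-a)}{n}$ grow linearly in $b$, so that the fixed negative power $-\frac{k}{n-k}$ forces the expression to $0$. The only difference is that the paper stops at the qualitative limit statement as $b \to \infty$, whereas you invert the inequality to extract the explicit threshold $b_0 = \frac{n}{k(1-\lambda)}\,\epsilon^{-\frac{n-k}{k}}$ --- a harmless (indeed slightly stronger) refinement of the same argument.
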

 
\begin{proof}
Because we want to show that for any $\epsilon > 0$, there exists a $b_0$ such that for all \(b > b_0\), \(\left(\frac{k(b - a)}{n}\right)^{-\frac{k}{n-k}} < \epsilon \), it is sufficient to show that as $b$ approaches infinity, the term \(\left(\frac{k(b - a)}{n}\right)^{-\frac{k}{n-k}}\) goes to zero.

Now, because $\lambda < \frac{99}{100}$ and $a < \lambda b$, we have

\[
\frac{k(b - a)}{n} > \frac{k(b - \lambda b)}{n} = b\frac{k(1 - \lambda)}{n} \geq b\frac{k(1 - \frac{99}{100})}{n} = b\frac{k}{100n}.
\]

Since $n > k$, $\frac{-k}{n-k}$ is negative. So, $0\leq \left(\frac{k(b-a)}{n}\right)^\frac{-k}{n-k}$, which trends to 0 as $b$ trends to infinity.

\end{proof}

We are now prepared to prove Theorem \ref{TheoremA}.

\begin{proof}[Proof of Theorem \ref{TheoremA}.]
 Let \(h(z) = z^n + az^k - 1\) and \(g(z) = bz^k\) so that \(f(z) = h(z) + \overline{g(z)}\). Suppose \(0 < a < \lambda b\) for \(0 < \lambda < \frac{99}{100}\) to be determined.

 Observe that to be on the critical curve of $f$, the equality \( |h'(z)| = |g'(z)| \) must hold. That is,
 \[
 |nz^{n-1} + akz^{k-1}| = |bkz^{k-1}|.
 \]
 Multiplying each side by \(|z^{-k+1}|\) gives
 \[
 |nz^{n-k} + ak| = |bk| = bk.
 \]

 Thus, by the triangle inequality, for z on the critical curve,
 \[
 bk \leq n|z|^{n-k} + ak,
 \]
which is equivalent to
 \[
 k\frac{b-a}{n} \leq |z|^{n-k}.
 \]

 Similarly, by the reverse triangle inequality,
 \[
 bk \geq n|z|^{n-k} - ak,
 \]
 which is equivalent to
 \[
 k\frac{b+a}{n} \geq |z|^{n-k}.
 \]

 Thus, if \(z\) is on the critical curve,
 \begin{equation}\label{eq:crit-bounds}
 k\frac{b-a}{n} \leq |z|^{n-k} \leq k\frac{b+a}{n}.
 \end{equation}

 Because \(k\frac{b-a}{n} \leq |z|^{n-k}\), multiplying each side of the inequality by \(|z|^{k-n}\) finds
 \[
 |z|^{k-n}\left(k\frac{b-a}{n}\right) \leq 1.
 \]
 Because \(-\frac{k}{n-k} < 0\), raising each side to the \(-\frac{k}{n-k}\) power gives
 \[
 1 \leq |z|^k \left(k \frac{b-a}{n} \right)^{-\frac{k}{n-k}}.
 \]

 With bounds on \(a\), \( |z|^{n-k}\) and 1, we continue by finding where \(|h(z)|\) dominates \(|g(z)|\).

 On the critical curve, we find
 
 \begin{align*}
 |h(z)| &= |z^n + az^k -1|\\
 &\leq |z|^n + a|z|^k + 1\\
 &= |z|^k(|z|^{n-k} + a) + 1\\
 &\leq |z|^k\left(\frac{k(b+a)}{n} + a + \left(\frac{k(b-a)}{n}\right)^{-\frac{k}{n-k}}\right)\\
 &< |z|^k\left(\frac{k(b + \lambda b)}{n} + \lambda b + \left(\frac{k(b - a)}{n}\right)^{-\frac{k}{n-k}}\right)\\
 &= |z|^k\left(\frac{bk(1 + \lambda)}{n} + \lambda b + \left(\frac{k(b - a)}{n}\right)^{-\frac{k}{n-k}}\right).
 \end{align*}
 
 Fix $\epsilon > 0$. Then, by Lemma \ref{helpful_computation}, there exists some $b_0$ such that for all $b > b_0$, \((\frac{k(b - a)}{n})^{-\frac{k}{n-k}} < \epsilon\). Therefore, for $b > b_0$,
 \begin{align*}
 |h(z)| &< |z|^k\left(\frac{bk(1 + \lambda)}{n} + b\lambda + \epsilon\right).\\
 \end{align*}
 
 Thus, factoring \(b\) from the final term above, we find
 \[
 |h(z)| < b|z|^k\left(\frac{k(1 + \lambda)}{n} + \lambda + \frac{\epsilon}{b}\right).
 \]
 
 Thereby, $|h(z)| < |g(z)|$ if
 \[
 b|z|^k\left(\frac{k(1 + \lambda)}{n} + \lambda + \frac{\epsilon}{b}\right) < b|z|^k.
 \]
 
 Simplifying, we find
 \[
 k + k\lambda + \lambda n + \frac{n\epsilon}{b} < n.
 \]
 
 Solving for $\lambda$ gives
 \[
 \lambda < \frac{n-k}{n+k} - \frac{n\epsilon}{b(n+k)}.
 \]
 
Assuming \(b_0 > 1\), we restrict $\lambda$ such that
\[
\lambda < \frac{n-k}{n+k} - \epsilon.
\]
 
 Therefore, whenever \(a < (\frac{n-k}{n+k} - \epsilon)b \), there exists a \(b_0\) such that for all \(b > b_0\), the term \(|bz^k|\) dominates along the entire critical curve of \(f\). Then, by Lemma \ref{k_zeros}, there are $k$ zeros in the sense-reversing region of $f$.

 By Lemma \ref{order_of_zeros}, the sum of the orders of the zeros of \(f\) must equal \(n\). Because \(k\) zeros are located within the sense-reversing region of $f$, there must be \(n + k\) zeros in the sense-preserving region so that the sum of the orders of the zeros of \(f\) is \(n\). Therefore, for all \(b > b_0\), there are $n + 2k$ zeros of $f$.
\end{proof}

\subsubsection{Proof of Theorem \ref{TheoremB}}

We now count the zeros of $f$ where $a > b$. Because $a, b > 0$ and $|z|^k = |\overline{z}^k|$, all our estimates on the critical curve given in the proof of Theorem \ref{TheoremA} depend only on $a|z|^k$ and $b|z|^k$ (see \eqref{eq:crit-bounds}). Thus, our argument is invariant under swapping $a$ and $b$. The only change is that we must replace Lemma \ref{k_zeros} with Lemma \ref{no_zeros}, given below, when counting the zeros inside the critical curve.

\begin{lemma}
\label{no_zeros}
Let
\(f(z) = z^n + az^k + b\overline{z}^k - 1\).
where $n$ and $k$ are natural numbers with $n > k$. Suppose \(a > b\). Define $h(z) = z^n + b\overline{z}^k - 1$ and $g(z) = az^k$. Then, whenever $|g(z)| > |h(z)|$ along the entire critical curve of $f$, there exist no zeros within the critical curve of \(f\).
\end{lemma}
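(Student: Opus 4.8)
The plan is to mirror the structure of Lemma~\ref{k_zeros}, exploiting the fact that when $a > b$ the origin sits in the sense-\emph{preserving} region rather than the sense-reversing region. Writing $f = h + g$ with $h(z) = z^n + b\overline{z}^k - 1$ and $g(z) = az^k$, both of which are harmonic, I would apply Rouché's Theorem for Harmonic Functions on the critical curve $C$, taking $g$ as the dominant function and $h$ as the perturbation. The hypothesis $|g(z)| > |h(z)|$ on $C$ is precisely the domination condition the theorem requires, so it yields that the sum of the orders of the zeros of $g$ enclosed by $C$ equals the sum of the orders of the zeros of $g + h = f$ enclosed by $C$.

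First I would pin down the location of the origin relative to the sense-reversing region. The sense-preserving and sense-reversing regions are intrinsic to $f$ and are determined by its canonical decomposition, not by the regrouping $f = h + g$ chosen for Rouché; as computed in the proof of Theorem~\ref{TheoremA}, the sense-reversing region is given by $|nz^{n-k} + ak| < bk$. Evaluating at $z = 0$ gives $ak < bk$, which is false since $a > b$. Hence the origin lies in the sense-preserving region and is therefore excluded from the region $\Omega$ enclosed by the critical curve. This is the structural difference from the $a < b$ case, where the origin was interior to $\Omega$.

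Next, since $g(z) = az^k$ is analytic and vanishes only at the origin, and the origin lies outside $\Omega$, the function $g$ has no zeros inside the critical curve. The sum of the orders of its zeros enclosed by $C$ is therefore $0$, and the harmonic Rouché conclusion forces the sum of the orders of the zeros of $f$ enclosed by $C$ to be $0$ as well.

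Finally I would convert this statement about the \emph{sum} of orders into a statement about the \emph{number} of zeros. Every zero of $f$ inside the critical curve lies in the sense-reversing region and hence has strictly negative order. A finite collection of strictly negative integers can sum to $0$ only when it is empty, so $f$ has no zeros within the critical curve. The hard part will not be the Rouché application itself but the sign bookkeeping: one must keep the canonical decomposition defining $\Omega$ separate from the non-canonical splitting $f = h + g$ used for Rouché, and then deduce from a vanishing sum of negative orders that no zeros are present. Should the critical curve consist of several loops, I would run this argument on each component, using that $g$ has no zero in any of them.
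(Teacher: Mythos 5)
Your proposal is correct and takes essentially the same route as the paper: it places the origin in the sense-preserving region (since $ak < bk$ fails when $a > b$), then applies the harmonic Rouch\'e theorem with $g(z) = az^k$ as the dominant function to conclude that the sum of the orders of the zeros of $f$ enclosed by the critical curve is zero. Your explicit bookkeeping---keeping the canonical decomposition separate from the Rouch\'e splitting, and deducing emptiness from the fact that strictly negative orders cannot sum to zero---spells out steps the paper's terser proof leaves implicit, but the underlying argument is the same.
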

 
\begin{proof}
By Lemma \ref{k_zeros}, we have that the sense-reversing region of \(f\) in the $\epsilon$-neighborhood around the origin is given by

\[
ak < bk.
\]

Thus, by flipping the inequality, the sense-preserving region is given by
\[
ak > bk.
\]

Because $a > b$, we find that the neighborhood around the origin is within the sense-preserving region of \(f\). Hence, by applying Rouché's Theorem, we find there are no zeros within the sense-reversing region of $f$.
\end{proof}

We are now ready to prove Theorem \ref{TheoremB}, similar to Theorem \ref{TheoremA}.

\begin{proof}[Proof of Theorem \ref{TheoremB}.]

This proof follows from Theorem \ref{TheoremA} by reversing the roles of \(a\) and \(b\). Thus, we find whenever $b < (\frac{n-k}{n+k} - \epsilon)a$, there exists an $a_0$ such that, for all \(a > a_0\), the term \(|az^k|\) dominates along the entire critical curve of $f$.

By Lemma \ref{no_zeros}, none of the zeros appear within the sense-reversing region of $f$. Therefore, there are no zeros of $f$ within its critical curve; so, by Lemma \ref{order_of_zeros}, $f$ has exactly $n$ zeros.

\end{proof}

\subsection{Zero Locating}

\subsubsection{Proof of Theorem \ref{Annulus}}

We now work to find two explicit annuli in the complex plane which contain all zeros of $f$ whenever $|b - a| > 2$.

\begin{remark}
Note that in the case examined in Theorem \ref{TheoremA}, for $b$ sufficiently larger than $a$, we are guaranteed the inner annulus contains the $k$ zeros within the critical curve of $f$, while the outer annulus contains all $n + k$ zeros of $f$ outside its critical curve. Similarly, for $a$ sufficiently larger than $b$, all zeros of $f$ belong outside its critical curve, with the inner annulus containing $k$ zeros and the outer annulus containing the remaining $n - k$ zeros. Moreover, for any fixed $a$, we find that as $b$ increases, the ratios $\frac{R_1}{R_2}$ and $\frac{R_3}{R_4}$ converge to 1. Similarly, for any fixed $b$, as $a$ increases, the ratios $\frac{R_1}{R_2}$ and $\frac{R_3}{R_4}$ converge to 1. In other words, our inner and outer annuli converge to an inner and an outer circle upon which all zeros of $f$ are located. An illustration of these annuli is given by Figure \ref{annular_bounds}.

\begin{figure}[ht]
 \centering
 \includegraphics[width=0.45\textwidth]{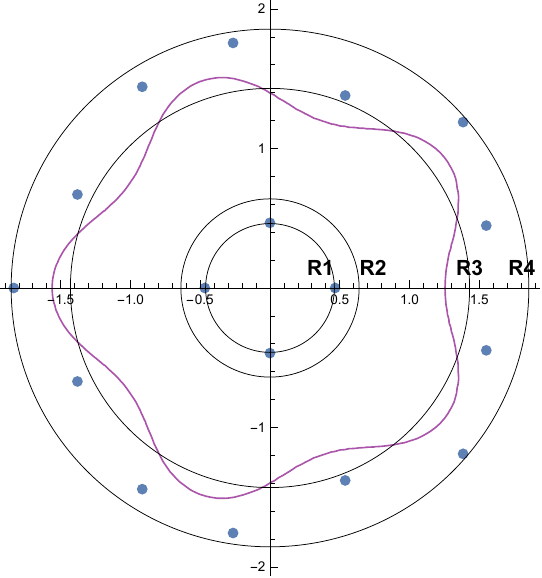}
 \caption{Radii binding location of zeros of $f(z) = z^9 + 7z^4 + 14\overline{z}^4 - 1$}
 \label{annular_bounds}
\end{figure}
\end{remark}

\begin{proof}[Proof of Theorem \ref{Annulus}.]

We now solve for the value of each radius, $R_1$, $R_2$, $R_3$, and $R_4$.

\textbf{$R_1:$} We show no zeros of $f$ exist within radius $R_1 = $ $\frac{1}{(b + a + 1)^\frac{1}{k}}$ of the origin by showing $-1$ is the dominant term of $f$ for any $|z| < R_1$.

Suppose $|z| < \frac{1}{(b + a + 1)^\frac{1}{k}}$, which implies $|z| < 1$. Thus,

\[
|z^n + az^k + b\overline{z}^k | \leq |z|^n + (a + b)|z|^k \leq (1 + a + b)|z|^k.
\]

So, $|-1| > |z^n + az^k + b\overline{z}^k |$ whenever $1 > (b + a + 1)|z|^k$. Therefore, $-1$ is the dominating term of $f$ whenever $|z|< \frac{1}{(1 + a + b)^\frac{1}{k}}$. Thus, by Rouché's Theorem, because $g(z) = -1 \neq 0$ for all $z \in \mathbb{C}$, and $g(z)$ dominates $f$ inside $R_1$, $f$ also has no zeros inside $R_1$.

\textbf{$R_2:$} We work by cases to show that $k$ zeros exist between $R_1$ and $R_2 = \frac{1}{(b-a-1)^\frac{1}{k}}$, and no zeros exist between $R_2$ and $R_3$.

Because we know $b-a > 2$, we have that $b-a-1 > 1$. So, $R_2 < 1$ and $R_3 > 1$. We therefore assume that $|z| < 1$ while solving for $R_2$ and that $|z| > 1$ when solving for $R_3$.

Case 1: Suppose $1 \leq |z| < R_3$. We claim $f$ has no zeros.

To see this, observe that if $1 \leq |z| \leq R_3 = (b-a-1)^\frac{1}{n-k}$, then $1 \leq |z|^{n-k} < (b-a-1)$. Or, equivalently, $|z|^k \leq |z|^n < (b-a-1)|z|^k$.

Hence, for such $z$,

\begin{align*}
|z^n + az^k - 1| &\leq |z|^n + a|z|^k + 1 \\
&\leq |z|^n + a|z|^k + |z|^k \\
&< b|z|^k \\
&= |b\overline{z}^k|.
\end{align*}

Thus, for $1 \leq |z| < R_3$, we find $|f(z)| \geq |b\overline{z}^k| - |z^n + az^k -1| > 0$. Also, for such $z$, $f(z) \neq 0$.

Case 2: Suppose $\frac{1}{(b-a-1)\frac{1}{k}} = R_2 < |z| \leq 1$. Then, we claim $f$ has no zeros.

To see this, note that if $\frac{1}{(b-a-1)^\frac{1}{k}} < |z| \leq 1$, then $1 < |z|^k(b-a-1)$. Thus, for such $z$,

\begin{align*}
|z^n + az^k - 1| &\leq |z|^n + a|z|^k + 1\\
&\leq |z|^k + a|z|^k + 1\\
&< |z|^k + a|z|^k + (b-a-1)|z|^k\\
&= |b\overline{z}^k|.
\end{align*}

Therefore, if $R_2 < |z| \leq 1$, we find $|f(z)| = |z^n + az^k + b\overline{z}^k - 1| \geq |b\overline{z}^k| - |z^n + az^k - 1| > 0$. Hence, for such $z$, $f(z) \neq 0$.

These cases establish that no zeros are in $\{z: R_2 < |z| < R_3\}$. We now would like to use Rouché's Theorem to argue that $f$ has exactly $k$ zeros within the inner annulus.

Recall that we previously showed that if $z$ is on the critical curve, $\frac{k}{n}(b-a) < |z|^{n-k} < \frac{k}{n}(b+a)$. So, if $(b-a) > \frac{n}{k}$, this is outside the unit circle. Because on $|z| = 1$, the above shows $|z^n + az^k - 1| < |b\overline{z}|$, by Rouché's Theorem, the sum of the orders of the zeros in the unit circle is $-k$. Because the entire region is sense-reversing, $f$ has exactly $k$ zeros in the unit circle, which, by the above, must be in $\{z: R_1 < |z| < R_2\}$.

\textbf{$R_3$:} We prove there are no zeros between $R_2$ and $R_3 = (b - a - 1)^\frac{1}{n-k}$ by showing that $b\overline{z}^k$ dominates inside $R_3$.

Recall from Case 2 that because $b-a > 2$, all values of $R_3$ must occur where $|z| > 1$. Recall also that $b\overline{z}^k$ dominates where $b|z|^k > |z|^n + a|z|^k + 1$.

Now, $|z|^n + a|z|^k + 1 \leq |z|^n + (a + 1)|z|^k$. Thus, $b\overline{z}^k$ dominates where $(b - a - 1)|z|^k > |z|^n$. Solving for $|z|$, we find this inequality holds whenever $|z| < (b - a - 1)^\frac{1}{n-k}$. By applying Rouché's Theorem, we find there must still exist $k$ zeros within $R_3$; however, we have already shown that $k$ zeros exist within $R_2$. Hence, no zeros of $f$ exist between $R_2$ and $R_3$.

\textbf{$R_4$}: We prove that $z^n$ dominates where $|z| > (b + a + 1)^\frac{1}{n-k} = R_4$, which shows that all zeros not between $R_1$ and $R_2$ must exist between $R_3$ and $R_4$.

Again, we assume $|z| \geq 1$. Observe that $|az^k + bz^k - 1| \leq (a + b)|z|^k + 1 \leq (a + b + 1)|z|^k$.

Hence, $z^n$ dominates where $|z^n| = |z|^n > (b + a + 1)|z|^k$, which implies $|z| > (b + a + 1)^\frac{1}{n-k}$.

Because $z^n$ dominates for all $|z| > R_4$, we know by Rouché's Theorem that all remaining zeros of $f$ lie between $R_3$ and $R_4$.

Thus, we have shown that $k$ zeros exist between $R_1$ and $R_2$, and the remaining zeros may exist only between $R_3$ and $R_4$. Hence, the two annuli described by the regions between these radii contain all zeros of $f$.
\end{proof}

\section{Conclusion}\label{Conclusion}

We have shown how Rouché's Theorem can be applied to non-circular critical curves to determine the exact number of zeros of a complex harmonic polynomial. It is natural to ask how broadly this approach can be extended to other types of critical curves. Our analysis generalizes similarly whenever \(n\) is a natural number with \(n > k\), including cases where \(k\) is negative (i.e., when $f$ has poles).

In the future, interesting directions include determining exact values for $b_0$ for which different $\lambda$-values determine the exact number of zeros of \(f\), beyond the asymptotic behavior of \(f\) as \(a\) or \(b\) increase toward infinity. This may require additional analytical techniques.

It is also believed that stronger bounds can be formed to locate the zeros, specifically that the bound of $R_2$ in Theorem \ref{Annulus} can be sharpened to $R_2 = \frac{1}{(b+a-1)^{\frac{1}{k}}}$. Moreover, it is suspected that sector bounds could also be found to further reduce the region in the plane where zeros of $f$ might be located.

\backmatter

\bmhead{Acknowledgements}

I would like to thank my research advisor, Dr. Jennifer Brooks, for going above and beyond in her feedback, guidance, and mentorship. I would also like to thank former group members, Alex Lee, Chammelia Zentz, and George Westover, for fruitful discussions during the early stages of forming the results presented in this paper.

\section*{Declarations}

The author certifies that they have no affiliations with or involvement in any organization or entity with any financial interest or non-financial interest in the subject matter or materials discussed in this manuscript.

\newpage

\bibliographystyle{amsplain}

\end{document}